\newtheorem{theorem}{Theorem}[section]
\newtheorem{lemma}[theorem]{Lemma}
\newtheorem{corollary}[theorem]{Corollary}
\theoremstyle{definition}
\newtheorem{definition}[theorem]{Definition}
\theoremstyle{remark}
\numberwithin{equation}{section}
\begin{document}
\title[A study of multivalent $q$-starlike functions]{A study of multivalent 
$q$-starlike functions connected with circular domain}
\author[L. Shi, Q. Khan, G. Srivastava, J.-L. Liu, M. Arif]{Lei Shi$^{1}$,
Qaiser Khan$^{2}$, Gautam Srivastava$^{3,4}$, Jin-Lin Liu$^{5}$, Muhammad
Arif$^{2,\ast}$}
\address{$^{1}$School of Mathematics and Statistics, Anyang Normal
University, Anyan 455002, Henan, People's Republic of China.}
\email{shimath@163.com}
\address{$^{2}$Department of Mathematics, Abdul Wali Khan University Mardan,
Mardan 23200, KP, Pakistan}
\email{qaisermath84@gmail.com, marifmaths@awkum.edu.pk}
\address{$^{3}$Department of Mathematics and Computer Science, Brandon
University, Brandon, MB R7A 6A9, Canada\\
$^{4}$Research Center for Interneural Computing, China Medical University,
Taichung 40402, Taiwan}
\email{srivastavag@brandonu.ca}
\address{$^{5}$Department of Mathematics, Yangzhou University, Yangzhou
225002, People's Republic of China}
\email{jlliu@yzu.edu.cn}
\keywords{Analytic functions, Multivalent functions, Janowski functions,
Differential subordinations, $q$-derivative, $q$-Starlike functions,
Fekete-Szeg\"{o} type inequalities}
\date{16 May 2019 \\
\indent$^{*}$Corresponding author\\
2010\textit{\ Mathematics Subject Classification.} 30C45, 30C50.}

\begin{abstract}
In the present article, our aim is to examine some useful problems including
the convolution problem, sufficiency criteria, coefficient estimates and
Fekete-Szeg\"{o} type inequalities for a new subfamily of analytic and
multivalent functions associated with circular domain. In addition, we also
define and study a Bernardi integral operator in its $q$-extension for
multivalent functions.
\end{abstract}

\maketitle

\section{Introduction}

\noindent The study of the $q$-extension of calculus or the $q$-analysis
attracted and motivated many researchers becauuse of its applications in
different parts of mathematical sciences. Jackson \cite{Jackson2,Jackson1})
was one of the main contributer among all the mathematicians who initiated
and established the theory of $q$-calculus. As an interesting sequel to \cite%
{Ismail}, in which use was made of the $q$-derivative operator for the first
time for studying the geometry of $q$-starlike functions, a firm footing of
the usage of the $q$-calculus in the context of Geometric Function Theory
was actually provided and the basic (or $q$-) hypergeometric functions were
first used in Geometric Function Theory in a book chapter by Srivastava
(see, for details, \cite[pp. 347 \textit{et seq.}]{b}). The theory of $q$%
-starlike functions was later extended to various families of $q$-starlike
functions by (for example) Agrawal and Sahoo \cite{Sahoo} (see also the
recent investigations on this subject by Srivastava et al. \cite%
{HMS1,HMS3,HMS4,HMS5,HMS6,HMS7}). Motivated by these $q$-developments in
Geometric Function Theory, many authors such as like Srivastava and Bansal 
\cite{b} were added their contributions in this direction which has made
this research area much more attractive.

In 2014, Kanas and R\u{a}ducanu \cite{Kanasq} used the familiar Hadamad
product to define a $q$-extension of the Ruscheweyh operator and discussed
important applications of this operator. Moreover, the extensive study of
this $q$-Ruscheweyh operator was further made by Mohammad and Darus \cite%
{Maslina2} and Mahmood and Sok\'{o}{\l } \cite{Shahid}. Recently, a new idea
was presented by Darus \cite{Maslina1} and introduced a new differential
operator called generalized $q$-differential operator with the help of $q$%
-hypergeometric functions where they studied some useful applications of
this operator. For the recent extension of different operators in $q$%
-analogue, see the references \cite{bak1,arif1,arif2}. The operator defined
in \cite{Kanasq} was extended further for multivalent functions by Arif et
al. \cite{arif} in which they investigated its important applications. The
aim of this paper is to define a family of multivalent $q$-starlike
functions associated with circular domain and to study some of its useful
properties. \medskip 

\noindent Let $\mathfrak{A}_{p}$ $\left( p\in 
%TCIMACRO{\U{2115} }%
%BeginExpansion
\mathbb{N}
%EndExpansion
=\left \{ 0,1,2,\ldots \right \} \right) $ contains all multivalent
functions say $f$ that are holomorphic or analytic in a subset $\mathbb{D}%
=\left \{ z:\left \vert z\right \vert <1\right \} $ of a complex plane $%
%TCIMACRO{\U{2102} }%
%BeginExpansion
\mathbb{C}
%EndExpansion
$ and having the series form:%
\begin{equation}
f(z)=z^{p}+\sum \limits_{l=1}^{\infty }a_{l+p}z^{l+p},\text{ }\left( z\in 
\mathbb{D}\right) .  \label{eq1}
\end{equation}%
For two analytic functions $f$ and $g$ in $\mathbb{D},$ then $f$ is
subordinate to $g,$ symbolically presented as $f\prec g$ or $f\left(
z\right) \prec g\left( z\right) ,$ if we can find an analytic function $w$
with the properties $w\left( 0\right) =0$ \& $\left \vert w\left( z\right)
\right \vert <1$ such that $f(z)=g(w(z))$ $\  \left( z\in \mathbb{D}\right) .$
Also, if $g$ is univalent in $\mathbb{D},$ then we have:%
\begin{equation*}
\begin{tabular}{llllll}
$f(z)\prec g(z)$ & $(z\in \mathbb{D})$ & $\Longleftrightarrow $ & $f(0)=g(0)$
& and & $f(\mathbb{D})\subset g(\mathbb{D}).$%
\end{tabular}%
\end{equation*}%
For given $q\in \left( 0,1\right) $, the derivative in $q$-analogue of $f$
is given by 
\begin{equation}
\mathcal{\partial }_{q}f(z)=\frac{f(z)-f\left( qz\right) }{z\left(
1-q\right) },\text{ }\left( z\neq 0,\text{ }q\neq 1\right) .  \label{aa2}
\end{equation}%
Making $\left( \ref{eq1}\right) $ and $\left( \ref{aa2}\right) ,$ we easily
get that for $n\in \mathbb{N}$ and $z\in \mathbb{D}$ 
\begin{equation}
\mathcal{\partial }_{q}\left \{ \sum_{n=1}^{\infty }a_{n+p}z^{n+p}\right \}
=\sum_{n=1}^{\infty }\left[ n+p,q\right] a_{n+p}z^{n+p-1},  \label{l1}
\end{equation}%
where 
\begin{equation*}
\left[ n,q\right] =\frac{1-q^{n}}{1-q}=1+\sum \limits_{l=1}^{n-1}q^{l},\  \ %
\left[ 0,q\right] =0.
\end{equation*}%
For $n\in 
%TCIMACRO{\U{2124} }%
%BeginExpansion
\mathbb{Z}
%EndExpansion
^{\ast }:=%
%TCIMACRO{\U{2124} }%
%BeginExpansion
\mathbb{Z}
%EndExpansion
\backslash \left \{ -1,-2,\ldots \right \} ,$ the $q$-number shift factorial
is given as: 
\begin{equation*}
\left[ n,q\right] !=\left \{ 
\begin{array}{l}
1,\text{ }n=0, \\ 
\left[ 1,q\right] \left[ 2,q\right] \ldots \left[ n,q\right] ,\text{ }n\in 
%TCIMACRO{\U{2115} }%
%BeginExpansion
\mathbb{N}
%EndExpansion
.%
\end{array}%
\right.
\end{equation*}%
Also, with $x>0$, the $q$-analogue of the Pochhammer symbol has the form: 
\begin{equation*}
\lbrack x,q]_{n}=\left \{ 
\begin{array}{l}
1,\ n=0, \\ 
\lbrack x,q][x+1,q]\cdots \lbrack x+n-1,q],\text{ }n\in 
%TCIMACRO{\U{2115} }%
%BeginExpansion
\mathbb{N}
%EndExpansion
,%
\end{array}%
\right.
\end{equation*}%
and, for $x>0$, the Gamma function in $q$-analogue is presented as 
\begin{equation*}
\Gamma _{q}\left( x+1\right) =\left[ x,q\right] \Gamma _{q}\left( t\right) 
\text{ and }\Gamma _{q}\left( 1\right) =1.
\end{equation*}%
We now consider a function 
\begin{equation}
\Phi _{p}\left( q,\mu +1;z\right) =z^{p}+\sum_{n=2}^{\infty }\Lambda _{n+p}%
\text{ }z^{n+p},\text{ }(\mu >-1,\text{ }z\in \mathbb{D}),  \label{pf}
\end{equation}%
with 
\begin{equation}
\Lambda _{n+p}=\frac{[\mu +1,q]_{n+p}}{[n+p,q]!}.  \label{ita}
\end{equation}

\noindent The series defined in $\left( \ref{pf}\right) $ is converges
absolutely in $\mathbb{D}$. Using $\Phi _{p}\left( q,\mu ;z\right) $ with $%
\mu >-1$ and idea of convolution, Arif et.al \cite{arif} established a
differential operator $\mathcal{L}_{q}^{\mu +p-1}:\mathfrak{A}%
_{p}\rightarrow \mathfrak{A}_{p}$ by%
\begin{equation}
\mathcal{L}_{q}^{\mu +p-1}f\left( z\right) =\Phi _{p}\left( q,\mu ;z\right)
\ast f(z)=z^{p}+\sum_{n=2}^{\infty }\Lambda _{n+p}\text{ }a_{n+p}z^{n+p},%
\text{ }\left( z\in \mathbb{D}\right) ,  \label{oper2}
\end{equation}%
Also we note that 
\begin{equation*}
\lim \limits_{q\rightarrow 1^{-}}\Phi _{p}\left( q,\mu ;z\right) =\frac{z^{p}%
}{\left( 1-z\right) ^{\mu +1}}\text{ \ and \ }\lim \limits_{q\rightarrow
1^{-}}\mathcal{L}_{q}^{\mu +p-1}f(z)=f(z)\ast \frac{z^{p}}{\left( 1-z\right)
^{\mu +1}}.
\end{equation*}

\noindent Now when $q\rightarrow 1^{-},$ the operator defined in $\left( \ref%
{oper2}\right) $ becomes to the familiar differential operator investigated
in \cite{Goel} and further, setting $p=1,$ we get the most familiar operator
known as Ruscheweyh operator \cite{R} (see also \cite{Ahuja,noor}). Also,
for different types of operators in $q$-analogue, see the works \cite%
{bak1,Maslina4,Maslina3,arif3,arif1,Maslina1}. \bigskip

\noindent Motivated from the work studied in \cite%
{arif2,Ismail,Sh1,Aouf1,Wang}, we establish a family $\mathcal{S}_{p}^{\ast
}\left( q,\mu ,A,B\right) $ using the operator $\mathcal{L}_{q}^{\mu +p-1}$
as:

\begin{definition}
Suppose that $q\in \left( 0,1\right) $ and $-1\leqq B<A\leqq 1.$ Then $f\in 
\mathfrak{A}_{p}$ belongs to the set $\mathcal{S}_{p}^{\ast }\left( q,\mu
,A,B\right) ,$ if it satisfies 
\begin{equation}
\frac{z\partial _{q}\mathcal{L}_{q}^{\mu +p-1}f\left( z\right) }{\left[ p,q%
\right] \mathcal{L}_{q}^{\mu +p-1}f\left( z\right) }\prec \frac{1+Az}{1+Bz},
\label{eq6}
\end{equation}%
where the function $\frac{1+Az}{1+Bz}$ is known as Janowski function studied
in \cite{Janwoski}.
\end{definition}

\noindent Alternatively,%
\begin{equation}
f\in \mathcal{S}_{p}^{\ast }\left( q,\mu ,A,B\right) \Leftrightarrow \left
\vert \frac{\frac{z\partial _{q}\mathcal{L}_{q}^{\mu +p-1}f\left( z\right) }{%
\left[ p,q\right] \mathcal{L}_{q}^{\mu +p-1}f\left( z\right) }-1}{A-B\frac{%
z\partial _{q}\mathcal{L}_{q}^{\mu +p-1}f\left( z\right) }{\left[ p,q\right] 
\mathcal{L}_{q}^{\mu +p-1}f\left( z\right) }}\right \vert <1.  \label{eq7}
\end{equation}

\section{\textsc{A Set of Lemmas}}

\begin{lemma}
\cite{Rogosinski} \label{R}Let $h\left( z\right) =1+\sum
\limits_{n=1}^{\infty }d_{n}z^{n}\prec K\left( z\right) =1+\sum
\limits_{n=1}^{\infty }k_{n}z^{n}$ in $\mathbb{D}$. If $K\left( z\right) $
is convex univalent in $\mathbb{D},$ then%
\begin{equation*}
\left \vert d_{n}\right \vert \leqq \left \vert k_{1}\right \vert ,\text{ \
for }n\geqq 1.
\end{equation*}
\end{lemma}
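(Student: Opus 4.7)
The plan is to reduce the inequality to a Schwarz-type bound via an averaging construction that isolates the target coefficient $d_{n}$.  Since $K$ is univalent with $K(0)=h(0)=1$, the subordination $h\prec K$ furnishes a Schwarz function $w$ on $\mathbb{D}$ (i.e., $w(0)=0$ and $|w(z)|<1$) such that $h(z)=K\bigl(w(z)\bigr)$.

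Fix $n\geqq 1$, set $\omega=e^{2\pi i/n}$, and form the arithmetic mean
\[
\Phi(z)\,=\,\frac{1}{n}\sum_{j=0}^{n-1}h(\omega^{j}z).
\]
A direct expansion collapses every Taylor coefficient whose index is not a multiple of $n$, giving $\Phi(z)=1+d_{n}z^{n}+d_{2n}z^{2n}+\cdots$, so $d_{n}$ sits as the first potentially nonzero coefficient of $\Phi$ past the constant term.

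Rewriting each summand as $K\bigl(w(\omega^{j}z)\bigr)$, the value $\Phi(z)$ is exhibited as a convex combination of $n$ points of $K(\mathbb{D})$.  Because $K$ is convex univalent, $K(\mathbb{D})$ is a convex domain, and hence $\Phi(z)\in K(\mathbb{D})$ for every $z\in\mathbb{D}$.  The univalence of $K$ then permits us to define $W=K^{-1}\circ\Phi$, an analytic self-map of $\mathbb{D}$ with $W(0)=0$, i.e.\ a Schwarz function.  Since $K'(0)=k_{1}\neq 0$, the order of vanishing of $\Phi-1$ at the origin equals that of $W$, so $W$ has a zero of order at least $n$ at the origin, and the higher-order Schwarz lemma applied to $W(z)/z^{n}$ yields $\bigl|W^{(n)}(0)/n!\bigr|\leqq 1$.

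Finally, matching the coefficient of $z^{n}$ on both sides of $\Phi(z)=K\bigl(W(z)\bigr)=1+k_{1}W(z)+k_{2}W(z)^{2}+\cdots$, while noting that $W(z)^{j}$ vanishes to order $\geqq jn>n$ for $j\geqq 2$, delivers $d_{n}=k_{1}\cdot W^{(n)}(0)/n!$, whence $|d_{n}|\leqq|k_{1}|$.  The pivotal and only genuinely convex-geometric step is the convex-combination argument that upgrades $\Phi$ to a subordinate of $K$; without it, one would be forced back to the classical bound $|d_{n}|\leqq n|k_{1}|$ valid for arbitrary univalent majorants.
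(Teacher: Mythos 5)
Your proof is correct. Note that the paper itself offers no argument for this lemma: it is quoted verbatim from Rogosinski's 1943 paper, so there is no in-paper proof to compare against. What you have written is essentially the classical proof of Rogosinski's theorem for convex majorants (the version recorded, e.g., in Duren's \emph{Univalent Functions}, \S 6.4): the $n$-fold rotation average $\Phi(z)=\frac{1}{n}\sum_{j=0}^{n-1}h(\omega^{j}z)$ kills all coefficients of index not divisible by $n$, convexity of $K(\mathbb{D})$ keeps $\Phi$ subordinate to $K$, and the resulting Schwarz function $W=K^{-1}\circ\Phi$ vanishes to order $\geqq n$, so that $d_{n}=k_{1}\,W^{(n)}(0)/n!$ with $\bigl|W^{(n)}(0)/n!\bigr|\leqq 1$. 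All the steps check out: $k_{1}=K'(0)\neq 0$ by univalence, so the order of vanishing of $\Phi-1$ transfers to $W$; the terms $k_{j}W^{j}$ with $j\geqq 2$ vanish to order $\geqq 2n$ and cannot contribute to the coefficient of $z^{n}$; and the degenerate case $W\equiv 0$ gives $d_{n}=0$ trivially. You have correctly identified the convexity of the image domain as the one indispensable hypothesis. This is a complete, self-contained justification of a lemma the paper merely cites.
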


\begin{lemma}
\label{lemm2} Let $\mathcal{W}$ contain all functions $w$ that are analytic
in $\mathbb{D}$ which satisfying $w\left( 0\right) =0$ \& $\left \vert
w(z)\right \vert <1.$ If the function $w\in \mathcal{W}$ given by 
\begin{equation*}
w(z)=\sum \limits_{k=1}^{\infty }w_{k}z^{k}\text{ }\left( z\in \mathbb{D}%
\right) .
\end{equation*}%
then for $\lambda \in 
%TCIMACRO{\U{2102} }%
%BeginExpansion
\mathbb{C}
%EndExpansion
,$ we have 
\begin{equation}
\left \vert w_{2}-\lambda w_{1}^{2}\right \vert \leqq \max \left \{ 1;\text{ 
}|\lambda |\right \} ,  \label{L21}
\end{equation}%
and 
\begin{equation}
\left \vert w_{3}+\frac{1}{4}w_{1}w_{2}+\frac{1}{16}w_{1}^{3}\right \vert
\leqq 1.  \label{L22}
\end{equation}%
These results are best possible.
\end{lemma}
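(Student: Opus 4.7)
The plan is to reduce both inequalities to coefficient estimates for the bounded analytic function $g(z)=w(z)/z$, which by Schwarz's lemma maps $\mathbb{D}$ into $\overline{\mathbb{D}}$ and has the expansion $g(z)=w_{1}+w_{2}z+w_{3}z^{2}+\cdots$. The main tool is the Schwarz--Pick algorithm: assuming $|w_{1}|<1$, the function
\begin{equation*}
g_{1}(z)=\frac{1}{z}\cdot \frac{g(z)-w_{1}}{1-\bar{w}_{1}g(z)}
\end{equation*}
is again analytic on $\mathbb{D}$ with $|g_{1}|\leqq 1$, and a short calculation gives
\begin{equation*}
g_{1}(0)=\frac{w_{2}}{1-|w_{1}|^{2}},\qquad g_{1}'(0)=\frac{w_{3}(1-|w_{1}|^{2})+\bar{w}_{1}w_{2}^{2}}{(1-|w_{1}|^{2})^{2}}.
\end{equation*}
The boundary case $|w_{1}|=1$ forces $g\equiv w_{1}$ by the maximum principle, so $w_{2}=w_{3}=0$ and both claims become trivial; hence I may assume $|w_{1}|<1$ in what follows.

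For \eqref{L21}, $|g_{1}(0)|\leqq 1$ is precisely the classical Schur bound $|w_{2}|\leqq 1-|w_{1}|^{2}$. The triangle inequality then yields
\begin{equation*}
|w_{2}-\lambda w_{1}^{2}|\leqq (1-|w_{1}|^{2})+|\lambda|\,|w_{1}|^{2}=1+(|\lambda|-1)|w_{1}|^{2},
\end{equation*}
which is $\leqq 1$ when $|\lambda|\leqq 1$ and $\leqq |\lambda|$ when $|\lambda|\geqq 1$. Sharpness is witnessed by $w(z)=z^{2}$ and $w(z)=z$ respectively.

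For \eqref{L22}, I would apply Schwarz's lemma once more, this time to $g_{1}$: setting $g_{2}(z)=z^{-1}(g_{1}(z)-g_{1}(0))/(1-\overline{g_{1}(0)}g_{1}(z))$, the inequality $|g_{2}(0)|\leqq 1$ gives $|g_{1}'(0)|\leqq 1-|g_{1}(0)|^{2}$. Substituting the expressions above and applying the triangle inequality once yields the refined Schur estimate
\begin{equation*}
|w_{3}|\leqq (1-|w_{1}|^{2})-\frac{|w_{2}|^{2}}{1+|w_{1}|}.
\end{equation*}
With $a=|w_{1}|$ and $b=|w_{2}|$, the triangle inequality bounds the left-hand side of \eqref{L22} by
\begin{equation*}
\Phi(a,b):=(1-a^{2})-\frac{b^{2}}{1+a}+\frac{ab}{4}+\frac{a^{3}}{16},
\end{equation*}
and it remains to verify $\Phi(a,b)\leqq 1$ on $[0,1]\times [0,\infty)$. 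Multiplying $1-\Phi(a,b)$ by $16(1+a)>0$ reduces this to showing that the quadratic $16b^{2}-4(a+a^{2})b+(16a^{2}+15a^{3}-a^{4})$ in $b$ is non-negative; its discriminant equals $16a^{2}(5a^{2}-58a-63)$, which is negative on $[0,1]$, so the quadratic (with positive leading coefficient) is strictly positive. Sharpness of \eqref{L22} is attained by $w(z)=z^{3}$, for which $w_{1}=w_{2}=0$ and $|w_{3}|=1$.

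The principal obstacle is the careful bookkeeping required to derive the refined Schur bound on $|w_{3}|$ through two iterations of the Schwarz--Pick algorithm; once this estimate is in hand, the verification of \eqref{L22} collapses to the elementary discriminant computation sketched above.
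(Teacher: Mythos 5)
Your proof is correct, and it is genuinely different from what the paper does, because the paper does not prove this lemma at all: it simply cites Keogh--Merkes for \eqref{L21} and Sok\'{o}{\l}--Thomas for \eqref{L22}. For \eqref{L21} your route is in fact the classical Keogh--Merkes argument --- the Schur bound $|w_{2}|\leqq 1-|w_{1}|^{2}$ followed by the triangle inequality, with $w(z)=z^{2}$ and $w(z)=z$ witnessing sharpness --- so there you have merely made the cited proof explicit. For \eqref{L22} you replace the citation by a self-contained two-step Schwarz--Pick computation giving the refined estimate $|w_{3}|\leqq 1-|w_{1}|^{2}-|w_{2}|^{2}/(1+|w_{1}|)$, after which the claim reduces to the non-negativity of the quadratic $16b^{2}-4(a+a^{2})b+(16a^{2}+15a^{3}-a^{4})$ in $b=|w_{2}|$; I checked the coefficient identities for $g_{1}(0)$ and $g_{1}'(0)$, the discriminant $16a^{2}(5a^{2}-58a-63)$, and its non-positivity on $a\in[0,1]$ (it vanishes at $a=0$, where the quadratic degenerates to $16b^{2}\geqq 0$), and the degenerate cases $|w_{1}|=1$ and $|g_{1}(0)|=1$ are correctly disposed of by the maximum principle. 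What the citation buys the authors is brevity and access to the sharp Prokhorov--Szynal-type description of all pairs $(\mu,\nu)$ with $|w_{3}+\mu w_{1}w_{2}+\nu w_{1}^{3}|\leqq 1$; what your route buys is a complete, elementary and verifiable proof of the single case $(\mu,\nu)=(1/4,1/16)$ actually used in the paper, together with the sharpness examples.
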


\noindent For the first and second part see the reference \cite{keough} and 
\cite{sokol} respectively.

\section{\textsc{The Main Results and Their Consequences}}

\begin{theorem}
Let $f\in \mathfrak{A}_{p}$ has the series form $\left( \ref{eq1}\right) $
and satisfing the inequality given by 
\begin{equation}
\sum_{n=1}^{\infty }\wedge _{n+p}\left( \left[ n+p,q\right] \left(
1-B\right) -\left[ p,q\right] \left( 1-A\right) \right) \left \vert
a_{n+p}\right \vert \leqq \left[ p,q\right] \left( A-B\right) .  \label{eq11}
\end{equation}
Then $f\in \mathcal{S}_{p}^{\ast }\left( q,\mu ,A,B\right) .$
\end{theorem}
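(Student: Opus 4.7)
My plan is to verify the equivalent membership criterion \eqref{eq7} directly, namely to show that $|N(z)|<|D(z)|$ for every $z\in\mathbb{D}$, where I abbreviate
\[
N(z):=z\partial_{q}\mathcal{L}_{q}^{\mu+p-1}f(z)-[p,q]\,\mathcal{L}_{q}^{\mu+p-1}f(z),
\]
\[
D(z):=A[p,q]\,\mathcal{L}_{q}^{\mu+p-1}f(z)-B\,z\partial_{q}\mathcal{L}_{q}^{\mu+p-1}f(z).
\]
First I would substitute the series expansion \eqref{oper2} of $\mathcal{L}_{q}^{\mu+p-1}f$ and apply the $q$-derivative rule $z\partial_{q}z^{n+p}=[n+p,q]z^{n+p}$ (drawn from \eqref{l1}) termwise, producing the explicit representations
\[
N(z)=\sum_{n=1}^{\infty}\bigl([n+p,q]-[p,q]\bigr)\Lambda_{n+p}a_{n+p}z^{n+p},
\]
\[
D(z)=(A-B)[p,q]z^{p}+\sum_{n=1}^{\infty}\bigl(A[p,q]-B[n+p,q]\bigr)\Lambda_{n+p}a_{n+p}z^{n+p}.
\]

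Second, I would estimate the two moduli in opposite directions: $|N|$ from above by the ordinary triangle inequality, and $|D|$ from below by the reverse triangle inequality applied around the nonvanishing leading term $(A-B)[p,q]z^{p}$. Forming $|N(z)|-|D(z)|$, dividing through by $|z|^{p}>0$, and using $|z|^{n}<1$ for $n\ge1$ reduces the target $|N(z)|<|D(z)|$ to the coefficient-type sufficient condition
\[
\sum_{n=1}^{\infty}\Big\{\bigl([n+p,q]-[p,q]\bigr)+\bigl|A[p,q]-B[n+p,q]\bigr|\Big\}\Lambda_{n+p}|a_{n+p}|\ \le\ (A-B)[p,q].
\]
Provided $A[p,q]-B[n+p,q]\ge0$ for every $n\ge1$ (automatic whenever $B\le0$, and implicit in the stated parameter range), the absolute value may be dropped and the bracket collapses algebraically to $[n+p,q](1-B)-[p,q](1-A)$, which is exactly the weight appearing in the hypothesis \eqref{eq11}. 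Hence \eqref{eq11} directly supplies the displayed coefficient inequality and therefore $|N(z)|<|D(z)|$ throughout $\mathbb{D}$, placing $f$ in $\mathcal{S}_{p}^{\ast}(q,\mu,A,B)$.

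The step I expect to be the main obstacle is the sign verification that allows removal of the absolute value on $A[p,q]-B[n+p,q]$; a minor and routine point is the passage from the weak coefficient inequality (obtained after letting $|z|\to 1^{-}$) to the strict disk inequality $|N(z)|<|D(z)|$, which follows because $|z|^{n}<1$ strictly on the open unit disk.
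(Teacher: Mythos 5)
Your proposal follows essentially the same route as the paper's own proof: both verify the criterion \eqref{eq7} by writing the quantity as $\left\vert \frac{z\partial_{q}\mathcal{L}_{q}^{\mu+p-1}f(z)-[p,q]\mathcal{L}_{q}^{\mu+p-1}f(z)}{A[p,q]\mathcal{L}_{q}^{\mu+p-1}f(z)-Bz\partial_{q}\mathcal{L}_{q}^{\mu+p-1}f(z)}\right\vert$, expanding numerator and denominator in series, bounding the numerator above by the triangle inequality and the denominator below by the reverse triangle inequality, and then using $|z|<1$ to reduce everything to the coefficient condition \eqref{eq11}. The one point you flag as an obstacle --- needing $A[p,q]-B[n+p,q]\geqq 0$ to drop the absolute value --- is silently glossed over in the paper, whose lower bound for the denominator is written without absolute values, so your version is if anything the more careful one.
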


\begin{proof}
To show $f\in \mathcal{S}_{p}^{\ast }\left( q,\mu ,A,B\right) ,$ we just
need to show the relation $\left( \ref{eq7}\right) $. For this we consider 
\begin{equation*}
\left \vert \frac{\frac{z\partial _{q}\mathcal{L}_{q}^{\mu +p-1}f\left(
z\right) }{\left[ p,q\right] \mathcal{L}_{q}^{\mu +p-1}f\left( z\right) }-1}{%
A-B\frac{z\partial _{q}\mathcal{L}_{q}^{\mu +p-1}f\left( z\right) }{\left[
p,q\right] \mathcal{L}_{q}^{\mu +p-1}f\left( z\right) }}\right \vert =.\left
\vert \frac{z\partial _{q}\mathcal{L}_{q}^{\mu +p-1}f\left( z\right) -\left[
p,q\right] \mathcal{L}_{q}^{\mu +p-1}f\left( z\right) }{A\left[ p,q\right] 
\mathcal{L}_{q}^{\mu +p-1}f\left( z\right) -Bz\partial _{q}\mathcal{L}%
_{q}^{\mu +p-1}f\left( z\right) }\right \vert
\end{equation*}%
Using $\left(\ref{oper2}\right)$, and then with the help of $\left( \ref%
{eq11}\right)$ and $\left(\ref{l1}\right) $, we have 
\begin{eqnarray*}
&=&%
\begin{array}{l}
\left \vert \frac{\left[ p,q\right] z^{p}+\sum_{n=1}^{\infty }\wedge
_{n+p}a_{n+p}\left[ n+p,q\right] z^{n+p}-\left[ p,q\right] \left(
z^{p}+\sum_{n=1}^{\infty }\wedge _{n+p}a_{n+p}z^{n+p}\right) }{A\left[ p,q%
\right] \left( z^{p}+\sum_{n=1}^{\infty }\wedge _{n+p}a_{n+p}z^{n+p}\right)
-B\left( \left[ p,q\right] z^{p}+\sum_{n=1}^{\infty }\wedge _{n+p}a_{n+p}%
\left[ n+p,q\right] z^{n+p}\right) }\right \vert%
\end{array}
\\
&=&%
\begin{array}{l}
\left \vert \frac{\sum_{n=1}^{\infty }\wedge _{n+p}a_{n+P}\left( \left[ n+p,q%
\right] -\left[ p,q\right] \right) z^{n+p}}{\left( A-B\right) \left[ p,q%
\right] z^{p}+\sum_{n=1}^{\infty }\wedge _{n+p}a_{n+p}\left( A\left[ p,q%
\right] -B\left[ n+p,q\right] \right) z^{n+p}}\right \vert%
\end{array}
\\
&\leqq &%
\begin{array}{l}
\frac{\sum_{n=1}^{\infty }\wedge _{n+p}\left \vert a_{n+P}\right \vert
\left( \left[ n+p,q\right] -\left[ p,q\right] \right) \left \vert z\right
\vert ^{n+p}}{\left( A-B\right) \left[ p,q\right] \left \vert z\right \vert
^{p}-\sum_{n=1}^{\infty }\wedge _{n+p}\left \vert a_{n+p}\right \vert \left(
A\left[ p,q\right] -B\left[ n+p,q\right] \right) \left \vert z\right \vert
^{n+p}}%
\end{array}
\\
&\leqq &%
\begin{array}{l}
\frac{\sum_{n=1}^{\infty }\wedge _{n+p}\left \vert a_{n+P}\right \vert
\left( \left[ n+p,q\right] -\left[ p,q\right] \right) }{\left( A-B\right) %
\left[ p,q\right] -\sum_{n=1}^{\infty }\wedge _{n+p}\left \vert
a_{n+p}\right \vert \left( A\left[ p,q\right] -B\left[ n+p,q\right] \right) }%
<1,%
\end{array}%
\end{eqnarray*}%
where we have used the inequality $\left( \ref{eq11}\right) $ and this
completes the proof.
\end{proof}

\noindent Varying the parameters $\mu ,$ $b,$ $A$ and $B$ in the last
Theorem, we get the following known results discussed earlier in \cite{Seoc}.

\begin{corollary}
\label{cor31} Let $f\in \mathfrak{A}$ be given by $\left( \ref{eq1}\right) $
and satisfy the inequality%
\begin{equation*}
\sum_{n=2}^{\infty }\left( \left[ n,q\right] (1-B)-1+A\right) \left \vert
a_{n}\right \vert \leqq A-B.
\end{equation*}%
Then the function $f\in \mathcal{S}_{q}^{\ast }[A,B].$
\end{corollary}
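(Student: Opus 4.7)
The plan is to deduce the corollary directly from the preceding theorem by specializing the parameters $p$ and $\mu$. The first paragraph of my proof would observe that the class $\mathcal{S}_q^{\ast}[A,B]$ in the corollary is precisely the $p=1$, $\mu=0$ reduction of $\mathcal{S}_p^{\ast}(q,\mu,A,B)$: indeed, with these choices $[p,q]=[1,q]=1$, the subordination condition $(\ref{eq6})$ becomes $z\partial_q f(z)/f(z)\prec (1+Az)/(1+Bz)$, and this is exactly the Janowski-type $q$-starlike class $\mathcal{S}_q^{\ast}[A,B]$ studied in \cite{Seoc}.

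Next I would check that the operator $\mathcal{L}_q^{\mu+p-1}$ collapses to the identity under these same parameter choices. Recall from $(\ref{ita})$ that $\Lambda_{n+p}=[\mu+1,q]_{n+p}/[n+p,q]!$; with $\mu=0$ and $p=1$ the numerator becomes $[1,q]_{n+1}=[1,q][2,q]\cdots[n+1,q]=[n+1,q]!$, so $\Lambda_{n+1}=1$ for every $n\geqq 1$. Consequently $\mathcal{L}_q^{0}f(z)=f(z)$ by $(\ref{oper2})$, and the hypothesis $(\ref{eq11})$ of Theorem~3.1 simplifies to
\begin{equation*}
\sum_{n=1}^{\infty}\bigl([n+1,q](1-B)-(1-A)\bigr)|a_{n+1}|\leqq A-B.
\end{equation*}

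Finally I would relabel the summation by setting $m=n+1$, which transforms the above into
\begin{equation*}
\sum_{m=2}^{\infty}\bigl([m,q](1-B)-1+A\bigr)|a_{m}|\leqq A-B,
\end{equation*}
precisely the hypothesis of the corollary. Applying Theorem~3.1 then yields $f\in \mathcal{S}_1^{\ast}(q,0,A,B)=\mathcal{S}_q^{\ast}[A,B]$, completing the argument. There is no genuine obstacle here: the entire proof is bookkeeping, and the only point requiring care is the verification that $\Lambda_{n+1}=1$ when $\mu=0$, which follows immediately from the definition of the $q$-Pochhammer symbol and the $q$-factorial.
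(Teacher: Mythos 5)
Your proposal is correct and follows exactly the route the paper intends: the paper offers no written proof beyond the remark that the corollary follows by ``varying the parameters'' in Theorem~3.1, and your specialization $p=1$, $\mu=0$ (so that $[1,q]=1$ and $\Lambda_{n+1}=[1,q]_{n+1}/[n+1,q]!=1$, making $\mathcal{L}_q^{\mu+p-1}$ the identity) together with the re-indexing $m=n+1$ is precisely that computation, carried out in full. No gaps.
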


\noindent By choosing $q\rightarrow 1^{-}$ in the last corollary, we get the
known result proved by Ahuja \cite{Ahuja} and furthermore for $A=1-\alpha $
and $B=-1,$ we obtain the result for the family $\mathcal{S}^{\ast }\left(
\xi \right) $ which was proved by Silverman \cite{silv2}.

\begin{theorem}
\label{Th1} Let $f\in \mathcal{S}_{p}^{\ast }\left( q,\mu ,A,B\right) $ be
of the form $\left( \ref{eq1}\right) .$ Then 
\begin{equation}
\left \vert a_{p+1}\right \vert \leqq \frac{\psi _{1}\left( A-B\right) }{%
\wedge _{1+p}},  \label{eq31}
\end{equation}%
and for $n\geqq 2,$%
\begin{equation}
\left \vert a_{n+p}\right \vert \leqq \frac{\left( A-B\right) \psi _{n}}{%
\wedge _{n+p}}\prod \limits_{t=1}^{n-1}\left( 1+\frac{\left[ p,q\right]
\left( A-B\right) }{\left( \left[ p+t,q\right] -\left[ p,q\right] \right) }%
\right) ,  \label{eq32}
\end{equation}%
where%
\begin{equation}
\psi _{n}:=\psi _{n}(p,q)=\frac{\left[ p,q\right] }{\left( \left[ n+p,q%
\right] -\left[ p,q\right] \right) }.  \label{eq33}
\end{equation}
\end{theorem}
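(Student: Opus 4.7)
The plan is to exploit the subordination defining $\mathcal{S}_p^*(q,\mu,A,B)$ to reduce the coefficient problem to a linear recursion, and then bound that recursion by induction. By the definition of subordination there exists a Schwarz function $w\in\mathcal{W}$ with
\[
\frac{z\partial_q\mathcal{L}_q^{\mu+p-1}f(z)}{[p,q]\,\mathcal{L}_q^{\mu+p-1}f(z)} = \frac{1+Aw(z)}{1+Bw(z)} =: h(z) = 1 + \sum_{n=1}^{\infty} h_n z^n.
\]
Since $(1+Az)/(1+Bz)$ is convex univalent in $\mathbb{D}$ with first Taylor coefficient $A-B$, Lemma \ref{R} immediately gives $|h_n|\le A-B$ for every $n\ge 1$; this is the only external analytic input that I would use. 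I then set $F(z):=\mathcal{L}_q^{\mu+p-1}f(z)=\sum_{n=0}^{\infty} b_{n+p} z^{n+p}$, with $b_p:=1$ and $b_{n+p}:=\wedge_{n+p}a_{n+p}$ for $n\ge 1$.

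Clearing denominators gives $z\partial_q F(z)=[p,q]\,F(z)\,h(z)$. Using (\ref{l1}) and comparing coefficients of $z^{n+p}$ on both sides yields, after cancelling the common term $[p,q]\,b_{n+p}$, the identity
\[
\bigl([n+p,q]-[p,q]\bigr)\,b_{n+p} = [p,q]\sum_{k=0}^{n-1} b_{k+p}\,h_{n-k},
\]
which, with $\psi_n$ as in (\ref{eq33}), rewrites as
\[
b_{n+p} = \psi_n \sum_{k=0}^{n-1} b_{k+p}\, h_{n-k}.
\]
Setting $n=1$ gives $b_{1+p}=\psi_1 h_1$, hence $|a_{1+p}|\le \psi_1(A-B)/\wedge_{1+p}$, which is (\ref{eq31}).

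For the bound (\ref{eq32}) with $n\ge 2$, passing to moduli and applying $|h_{n-k}|\le A-B$ gives the majorant
\[
|b_{n+p}| \le \psi_n(A-B)\sum_{k=0}^{n-1}|b_{k+p}|.
\]
I would then define real numbers $C_0:=1$ and $C_n:=\psi_n(A-B)\sum_{k=0}^{n-1}C_k$, so that $|b_{n+p}|\le C_n$ follows by induction on $n$. The closed form I am aiming for is
\[
C_n = \psi_n(A-B)\prod_{t=1}^{n-1}\left(1+\frac{[p,q](A-B)}{[p+t,q]-[p,q]}\right),
\]
which I would prove by establishing, in parallel, the telescoping identity $\sum_{k=0}^{n}C_k=\prod_{t=1}^{n}(1+\psi_t(A-B))$ by induction on $n$; the inductive step is the one-line factorisation $(1+x_n)\prod_{t=1}^{n-1}(1+x_t)=\prod_{t=1}^{n}(1+x_t)$ with $x_t:=\psi_t(A-B)$. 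Dividing $C_n$ by $\wedge_{n+p}$ then delivers (\ref{eq32}).

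The main obstacle is recognising the closed product form of the majorant recursion; once the auxiliary identity for $\sum_{k=0}^{n} C_k$ is guessed, the induction step is essentially trivial, and everything else reduces to careful bookkeeping of $q$-numbers and indices between $F$, $h$, and the coefficients $b_{n+p}$.
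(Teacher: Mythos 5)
Your proposal is correct and follows essentially the same route as the paper: the same subordination identity, the same application of Rogosinski's Lemma \ref{R} to get $|h_n|\leqq A-B$, the same coefficient recursion $([n+p,q]-[p,q])b_{n+p}=[p,q]\sum_{k=0}^{n-1}b_{k+p}h_{n-k}$, and the same induction to reach the product bound. Your explicit telescoping identity $\sum_{k=0}^{n}C_k=\prod_{t=1}^{n}(1+\psi_t(A-B))$ is just a cleaner packaging of the inductive step the paper carries out directly.
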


\begin{proof}
If $f\in \mathcal{S}_{p}^{\ast }\left( q,\mu ,A,B\right) ,$ then by
definition we have 
\begin{equation}
\frac{z\partial _{q}\mathcal{L}_{q}^{\mu +p-1}f\left( z\right) }{\left[ p,q%
\right] \mathcal{L}_{q}^{\mu +p-1}f\left( z\right) }=\frac{1+Aw(z)}{1+Bw(z)}.
\label{eq22}
\end{equation}%
Let us put%
\begin{equation*}
p(z)=1+\sum \limits_{n=1}^{\infty }d_{n}z^{n}=\frac{1+Aw(z)}{1+Bw(z)}.
\end{equation*}%
Then by Lemma \ref{R}, we get%
\begin{equation}
\left \vert d_{n}\right \vert \leqq A-B.  \label{l3}
\end{equation}%
Now, from $\left( \ref{eq22}\right) $ and $\left( \ref{oper2}\right) $, we
can write%
\begin{equation}
\begin{array}{lll}
z^{p}+\sum \limits_{n=1}^{\infty }\frac{\left[ n+p,q\right] }{\left[ p,q%
\right] }\Lambda _{n+p}\text{ }a_{n+p}z^{n+p} & = & \left( 1+\sum
\limits_{n=1}^{\infty }d_{n}z^{n}\right) \left( z^{p}+\sum
\limits_{n=1}^{\infty }\Lambda _{n+p}\text{ }a_{n+p}z^{n+p}\right) .%
\end{array}
\label{l2}
\end{equation}%
Equating coefficients of $z^{n+p}$ on both sides%
\begin{equation*}
\begin{array}{lll}
\wedge _{n+p}\left( \left[ n+p,q\right] -\left[ p,q\right] \right) a_{n+p} & 
= & \left[ p,q\right] \wedge _{n+p-1}a_{n+p-1}d_{1}+\cdots +\left[ p,q\right]
\wedge _{1+p}a_{1+p}d_{n-1}.%
\end{array}%
\end{equation*}%
Taking absolute on both sides and then using $\left( \ref{l3}\right) ,$ we
have 
\begin{equation*}
\wedge _{n+p}\left( \left[ n+p,q\right] -\left[ p,q\right] \right) \left
\vert a_{n+p}\right \vert \leqq \left[ p,q\right] \left( A-B\right) \left(
1+\sum \limits_{k=1}^{n-1}\wedge _{k+p}\left \vert a_{k+p}\right \vert
\right) ,
\end{equation*}%
and this further implies%
\begin{equation}
\left \vert a_{n+p}\right \vert \leqq \frac{\left( A-B\right) \psi _{n}}{%
\wedge _{n+p}}\left( 1+\sum \limits_{k=1}^{n-1}\wedge _{k+p}\left \vert
a_{k+p}\right \vert \right) ,  \label{eq13}
\end{equation}%
where $\psi _{n}$ is given by $\left( \ref{eq33}\right) .$ So for $n=1$, we
have from $\left( \ref{eq13}\right) $%
\begin{equation*}
\left \vert a_{p+1}\right \vert \leqq \frac{\left( A-B\right) \psi _{1}}{%
\wedge _{1+p}},
\end{equation*}%
and this shows that $\left( \ref{eq31}\right) $ holds for $n=1.$ To prove $%
\left( \ref{eq32}\right) $ we apply mathematical induction. Therefore for $%
n=2$, we have from $\left( \ref{eq31}\right) $

\begin{equation*}
\left \vert a_{p+2}\right \vert \leqq \frac{\left( A-B\right) \psi _{2}}{%
\wedge _{2+p}}\left( 1+\wedge _{1+p}\left \vert a_{1+p}\right \vert \right) ,
\end{equation*}%
using $\left( \ref{eq31}\right) $, we have 
\begin{equation*}
\left \vert a_{p+2}\right \vert \leqq \frac{\left( A-B\right) \psi _{2}}{%
\wedge _{2+p}}\left( 1+\left( A-B\right) \psi _{1}\right) ,
\end{equation*}%
which clearly shows that $\left( \ref{eq32}\right) $ holds for $n=2$. Let us
assume that $\left( \ref{eq32}\right) $ is true for $n\leqq m-1,$ that is,%
\begin{equation*}
\left \vert a_{m-1+p}\right \vert \leqq \frac{\left( A-B\right) \psi _{m-1}}{%
\wedge _{m+p-1}}\prod \limits_{t=1}^{m-2}\left( 1+\left( A-B\right) \psi
_{t}\right) .
\end{equation*}%
Consider%
\begin{eqnarray*}
\left \vert a_{m+p}\right \vert &\leqq &\frac{\left( A-B\right) \psi _{m}}{%
\wedge _{m+p}}\left( 1+\sum \limits_{k=1}^{m-1}\wedge _{k+p}\left \vert
a_{k+p}\right \vert \right) \\
&=&\frac{\left( A-B\right) \psi _{m}}{\wedge _{m+p}}\left \{ 1+\left(
A-B\right) \psi _{1}+\ldots +\left( A-B\right) \psi _{m-1}\prod
\limits_{t=1}^{m-2}\left( 1+\left( A-B\right) \psi _{t}\right) \right \} \\
&=&\frac{\left( A-B\right) \psi _{m}}{\wedge _{m+p}}\prod
\limits_{t=1}^{m-1}\left( 1+\frac{\left[ p,q\right] \left( A-B\right) }{%
\left( \left[ p+t,q\right] -\left[ p,q\right] \right) }\right) ,
\end{eqnarray*}
and this implies that the given result is true for $n=m.$ Hence, using
mathematical induction, we achived the inequality $\left( \ref{eq32}\right)
. $
\end{proof}

\begin{theorem}
\label{Th2}Let $f\in \mathcal{S}_{p}^{\ast }\left( q,\mu ,A,B\right) $ and
be given by $\left( \ref{eq1}\right) .$ Then for $\lambda \in \mathbb{C}$ 
\begin{equation*}
\left \vert a_{p+2}-\lambda a_{p+1}^{2}\right \vert \leqq \frac{(A-B)\psi
_{2}}{\Lambda _{p+2}}\left \{ 1;\text{ }\left \vert \upsilon \right \vert
\right \} ,
\end{equation*}%
where $\upsilon $ is given by%
\begin{equation}
\upsilon =\left( B-(A-B)\psi _{1}\right) +\frac{\Lambda _{p+2}\psi _{1}^{2}}{%
\Lambda _{p+1}^{2}\psi _{2}}(A-B)\lambda .  \label{eq34}
\end{equation}
\end{theorem}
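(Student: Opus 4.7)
The plan is to reduce the Fekete--Szeg\"o functional $a_{p+2}-\lambda a_{p+1}^{2}$ to an expression of the form $w_{2}-\upsilon w_{1}^{2}$ in the coefficients of the Schwarz function arising from the defining subordination, and then invoke Lemma \ref{lemm2}(\ref{L21}).

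\textbf{Step 1: Pass from subordination to a Schwarz function.} Since $f\in\mathcal{S}_{p}^{\ast}(q,\mu,A,B)$, there exists $w\in\mathcal{W}$ with $w(z)=w_{1}z+w_{2}z^{2}+\cdots$ such that
\begin{equation*}
\frac{z\partial_{q}\mathcal{L}_{q}^{\mu+p-1}f(z)}{[p,q]\,\mathcal{L}_{q}^{\mu+p-1}f(z)}=\frac{1+Aw(z)}{1+Bw(z)}=:1+d_{1}z+d_{2}z^{2}+\cdots .
\end{equation*}
A direct geometric-series expansion gives the explicit identifications $d_{1}=(A-B)w_{1}$ and $d_{2}=(A-B)(w_{2}-Bw_{1}^{2})$, which are the only coefficients I will need.

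\textbf{Step 2: Solve for $a_{p+1}$ and $a_{p+2}$.} Using the product identity \eqref{l2} already derived in the proof of Theorem~\ref{Th1}, I would equate the coefficients of $z^{p+1}$ and $z^{p+2}$ on both sides. The $z^{p+1}$ equation yields
\begin{equation*}
\Lambda_{p+1}a_{p+1}=\psi_{1}d_{1},
\end{equation*}
and the $z^{p+2}$ equation, after substituting the previous line, yields
\begin{equation*}
\Lambda_{p+2}a_{p+2}=\psi_{2}\bigl(d_{2}+\psi_{1}d_{1}^{2}\bigr),
\end{equation*}
where $\psi_{1},\psi_{2}$ are as in \eqref{eq33}.

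\textbf{Step 3: Assemble the Fekete--Szeg\"o combination.} Inserting the formulas for $a_{p+1}$ and $a_{p+2}$ and then replacing $d_{1},d_{2}$ by their $w$-expressions from Step~1, I would compute
\begin{equation*}
a_{p+2}-\lambda a_{p+1}^{2}=\frac{(A-B)\psi_{2}}{\Lambda_{p+2}}\Bigl[w_{2}-\upsilon\, w_{1}^{2}\Bigr],
\end{equation*}
where the scalar multiplying $w_{1}^{2}$, after collecting the $-B$ term from $d_{2}$, the $\psi_{1}d_{1}^{2}$ term, and the $\lambda a_{p+1}^{2}$ term, is precisely the quantity $\upsilon$ defined in \eqref{eq34}. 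This algebraic matching with the given $\upsilon$ is the step requiring the most care: the three contributions have different denominators ($1$, $\psi_{2}$, $\psi_{2}\Lambda_{p+1}^{2}$ after normalization), and I expect the main obstacle to lie in bookkeeping the prefactors so that the bracketed term has coefficient exactly $1$ on $w_{2}$.

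\textbf{Step 4: Apply Lemma \ref{lemm2}.} Finally I apply the bound \eqref{L21} to $|w_{2}-\upsilon w_{1}^{2}|\leq\max\{1,|\upsilon|\}$, obtaining
\begin{equation*}
|a_{p+2}-\lambda a_{p+1}^{2}|\leq\frac{(A-B)\psi_{2}}{\Lambda_{p+2}}\max\{1,|\upsilon|\},
\end{equation*}
which is the claimed inequality. Sharpness, if desired, would follow by choosing $w$ to be the extremal Schwarz function in Lemma~\ref{lemm2}(\ref{L21}).
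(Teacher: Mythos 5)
Your proposal is correct and follows essentially the same route as the paper: both express $a_{p+1}$ and $a_{p+2}$ in terms of the Schwarz coefficients $w_{1},w_{2}$ via the subordination (the paper expands the quotient directly, you equate coefficients in the product identity \eqref{l2} — the same algebra), and both then apply the bound \eqref{L21} of Lemma \ref{lemm2} to $w_{2}-\upsilon w_{1}^{2}$. The coefficient bookkeeping you flag in Step 3 does work out exactly to the $\upsilon$ of \eqref{eq34}, matching the paper's computation.
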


\begin{proof}
Let $f\in \mathcal{S}_{p}^{\ast }\left( q,\mu ,A,B\right) $ and consider the
right hand side of $\left( \ref{eq22}\right) $ we have%
\begin{equation*}
\frac{1+Aw(z)}{1+Bw(z)}=\left( 1+A\sum \limits_{k=1}^{\infty
}w_{k}z^{k}\right) \left( 1+B\sum \limits_{k=1}^{\infty }w_{k}z^{k}\right)
^{-1},
\end{equation*}%
where 
\begin{equation*}
w(z)=\sum \limits_{k=1}^{\infty }w_{k}z^{k},
\end{equation*}%
and after simple computations, we can rewrite 
\begin{equation}
\frac{1+Aw(z)}{1+Bw(z)}=1+(A-B)w_{1}z+(A-B)\left \{ w_{2}-Bw_{1}^{2}\right
\} z^{2}+\ldots .  \label{Fekete1}
\end{equation}%
Now, left hand side of $\left( \ref{eq22}\right) $, we have 
\begin{eqnarray}
\frac{z\partial _{q}\mathcal{L}_{q}^{\mu +p-1}f\left( z\right) }{\left[ p,q%
\right] \mathcal{L}_{q}^{\mu +p-1}f\left( z\right) } &=&\left( 1+\sum
\limits_{n=1}^{\infty }\frac{\left[ n+p,q\right] }{\left[ p,q\right] }%
\Lambda _{n+p}\text{ }a_{n+p}z^{n}\right) \left( 1+\sum
\limits_{n=1}^{\infty }\Lambda _{n+p}\text{ }a_{n+p}z^{n}\right) ^{-1} 
\notag \\
&=&1+\frac{\Lambda _{1+p}}{\psi _{1}}a_{1+p}z+\left( \frac{\Lambda
_{2+p}a_{2+p}}{\psi _{2}}-\frac{\Lambda _{1+p}^{2}\text{ }a_{1+p}^{2}}{\psi
_{1}}\right) z^{2}+\ldots .  \label{Fekete2}
\end{eqnarray}%
From $\left( \ref{Fekete1}\right) $ and $\left( \ref{Fekete2}\right) ,$ we
have 
\begin{eqnarray*}
a_{p+1} &=&\frac{\psi _{1}}{\Lambda _{p+1}}(A-B)w_{1} \\
a_{p+2} &=&\frac{(A-B)\psi _{2}}{\Lambda _{p+2}}\left \{ w_{2}+\left(
(A-B)\psi _{1}-B\right) w_{1}^{2}\right \} .
\end{eqnarray*}%
Now consider%
\begin{eqnarray*}
\left \vert a_{p+2}-\lambda a_{p+1}^{2}\right \vert &=&\left \vert \frac{%
(A-B)\psi _{2}}{\Lambda _{p+2}}\left \{ w_{2}+\left( (A-B)\psi _{1}-B\right)
w_{1}^{2}\right \} -\lambda \frac{\psi _{1}^{2}}{\Lambda _{p+1}^{2}}%
(A-B)^{2}w_{1}^{2}\right \vert \\
&=&\frac{(A-B)\psi _{2}}{\Lambda _{p+2}}\left \vert w_{2}-\left \{ \left(
B-(A-B)\psi _{1}\right) +\frac{\Lambda _{p+2}\psi _{1}^{2}}{\Lambda
_{p+1}^{2}\psi _{2}}(A-B)\lambda \right \} w_{1}^{2}\right \vert ,
\end{eqnarray*}%
using Lemma \ref{lemm2}, we have%
\begin{equation*}
\left \vert a_{p+2}-\lambda a_{p+1}^{2}\right \vert \leqq \frac{(A-B)\psi
_{2}}{\Lambda _{p+2}}\left \{ 1;\text{ }\left \vert \upsilon \right \vert
\right \} ,
\end{equation*}%
where $\upsilon $ is given by%
\begin{equation*}
\upsilon =\left( B-(A-B)\psi _{1}\right) +\frac{\Lambda _{p+2}\psi _{1}^{2}}{%
\Lambda _{p+1}^{2}\psi _{2}}(A-B)\lambda .
\end{equation*}%
This completes the proof.
\end{proof}

\begin{theorem}
Let $f\in \mathcal{S}_{p}^{\ast }\left( q,\mu ,A,B\right) $ and be given by $%
\left( \ref{eq1}\right) .$ Then 
\begin{equation*}
\left \vert a_{p+3}-\frac{q+2}{q^{2}+q+1}\frac{\Lambda _{1+p}\Lambda _{2+p}}{%
\Lambda _{3+p}}a_{p+2}a_{p+1}+\frac{1}{[3,q]}\frac{\Lambda _{1+p}^{3}}{%
\Lambda _{3+p}}a_{p+1}^{3}\right \vert \leqq (A-B)\left \{ \frac{4\left(
2B-1\right) ^{2}+1}{8\Lambda _{3+p}}\right \} \psi _{3},
\end{equation*}%
where $\psi _{n}$ and $\wedge _{n+p}$ are defined by $\left( \ref{eq33}%
\right) $ and $\left( \ref{ita}\right) $ respectively.
\end{theorem}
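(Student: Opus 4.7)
My plan is to mimic the derivations used in the proofs of Theorems~\ref{Th1} and~\ref{Th2} by pushing the coefficient comparison one order further. Since $f\in \mathcal{S}_{p}^{\ast }(q,\mu ,A,B)$, the subordination supplies a Schwarz function $w\in \mathcal{W}$, $w(z)=\sum_{k\ge 1}w_{k}z^{k}$, satisfying
\[
\frac{z\partial _{q}\mathcal{L}_{q}^{\mu +p-1}f(z)}{\left[ p,q\right] \mathcal{L}_{q}^{\mu +p-1}f(z)}=\frac{1+Aw(z)}{1+Bw(z)};
\]
I would expand both sides as power series in $z$ up to order $z^{3}$ and equate the $z^{3}$ coefficients.

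For the right-hand side, extending $\left( \ref{Fekete1}\right) $ by one order yields $(A-B)\bigl( w_{3}-2Bw_{1}w_{2}+B^{2}w_{1}^{3}\bigr) $ as the coefficient of $z^{3}$. For the left-hand side, I would continue the division begun in $\left( \ref{Fekete2}\right) $: writing the numerator as $1+\sum F_{n}z^{n}$ with $F_{n}=\tfrac{[n+p,q]}{[p,q]}\Lambda _{n+p}a_{n+p}$ and the denominator as $1+\sum G_{n}z^{n}$ with $G_{n}=\Lambda _{n+p}a_{n+p}$, the quotient coefficient $C_{3}=F_{3}-G_{3}-G_{1}C_{2}-G_{2}C_{1}$, after substituting the formulas for $C_{1},C_{2}$ already extracted in Theorem~\ref{Th2}, becomes
\[
C_{3}=\frac{\Lambda _{3+p}a_{p+3}}{\psi _{3}}-\Lambda _{1+p}\Lambda _{2+p}a_{p+1}a_{p+2}\!\left( \frac{1}{\psi _{1}}+\frac{1}{\psi _{2}}\right) +\frac{\Lambda _{1+p}^{3}}{\psi _{1}}a_{p+1}^{3}.
\]
Equating $C_{3}$ with the right-hand-side coefficient and multiplying through by $\psi _{3}/\Lambda _{3+p}$ produces precisely the linear combination appearing in the theorem.

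The weight matching relies on the elementary identity $[n+p,q]-[p,q]=q^{p}[n,q]$, which rewrites $\left( \ref{eq33}\right) $ as $\psi _{n}=[p,q]/(q^{p}[n,q])$, so that $\psi _{3}/\psi _{n}=[n,q]/[3,q]$. Hence
\[
\psi _{3}\!\left( \frac{1}{\psi _{1}}+\frac{1}{\psi _{2}}\right) =\frac{[1,q]+[2,q]}{[3,q]}=\frac{q+2}{q^{2}+q+1},\qquad \frac{\psi _{3}}{\psi _{1}}=\frac{1}{[3,q]},
\]
which are precisely the cross-term and cubic-term coefficients in the statement. The problem is thus reduced to the purely analytic estimate
\[
\bigl| w_{3}-2Bw_{1}w_{2}+B^{2}w_{1}^{3}\bigr| \leqq \frac{4(2B-1)^{2}+1}{8},
\]
after which the prefactor $(A-B)\psi _{3}/\Lambda _{3+p}$ delivers the stated bound.

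The main obstacle is this final estimate, for which Lemma~\ref{lemm2} is the natural tool. My approach is to split
\[
w_{3}-2Bw_{1}w_{2}+B^{2}w_{1}^{3}=\bigl( w_{3}+\tfrac{1}{4}w_{1}w_{2}+\tfrac{1}{16}w_{1}^{3}\bigr) +w_{1}\bigl[ -(2B+\tfrac{1}{4})w_{2}+(B^{2}-\tfrac{1}{16})w_{1}^{2}\bigr],
\]
bound the first parenthesis by $1$ via $\left( \ref{L22}\right) $, factor $-(2B+\tfrac{1}{4})$ out of the inner bracket, and apply $\left( \ref{L21}\right) $ with $\lambda =(B^{2}-\tfrac{1}{16})/(2B+\tfrac{1}{4})$, together with $|w_{1}|\leq 1$. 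The delicate step is reconciling the numerical factor obtained this way with the exact value $\tfrac{4(2B-1)^{2}+1}{8}$ claimed in the statement; everything upstream of this point is formal power-series algebra.
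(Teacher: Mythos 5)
Your reduction to the Schwarz--coefficient inequality is correct and is exactly the paper's route: the recursion $C_{3}=F_{3}-G_{3}-G_{1}C_{2}-G_{2}C_{1}$, the identity $[n+p,q]-[p,q]=q^{p}[n,q]$ giving $\psi _{3}\bigl(\psi _{1}^{-1}+\psi _{2}^{-1}\bigr)=\tfrac{q+2}{q^{2}+q+1}$ and $\psi _{3}/\psi _{1}=1/[3,q]$, and the resulting identity
\[
a_{p+3}-\frac{q+2}{q^{2}+q+1}\frac{\Lambda _{1+p}\Lambda _{2+p}}{\Lambda _{3+p}}a_{p+2}a_{p+1}+\frac{1}{[3,q]}\frac{\Lambda _{1+p}^{3}}{\Lambda _{3+p}}a_{p+1}^{3}=\frac{(A-B)\psi _{3}}{\Lambda _{3+p}}\bigl(w_{3}-2Bw_{1}w_{2}+B^{2}w_{1}^{3}\bigr)
\]
all match what the paper obtains from $\left( \ref{Fekete1}\right) $ and $\left( \ref{Fekete2}\right) $. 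The genuine gap is the step you yourself flag as ``delicate,'' and it cannot be closed. Your splitting, after $\left( \ref{L22}\right) $, $\left( \ref{L21}\right) $ and $|w_{1}|\leqq 1$, yields the bound $1+\max \bigl\{|2B+\tfrac{1}{4}|,\;|B^{2}-\tfrac{1}{16}|\bigr\}$, which is \emph{not} $\leqq \tfrac{4(2B-1)^{2}+1}{8}$ in general: at $B=0$ it equals $\tfrac{5}{4}$ while the claimed constant is $\tfrac{5}{8}$. No choice of $\lambda $ in $\left( \ref{L21}\right) $ repairs this, because the target inequality $\bigl|w_{3}-2Bw_{1}w_{2}+B^{2}w_{1}^{3}\bigr|\leqq \tfrac{4(2B-1)^{2}+1}{8}$ is itself false whenever the right-hand side is less than $1$: the admissible Schwarz function $w(z)=z^{3}$ gives left-hand side equal to $1$, whereas $\tfrac{4(2B-1)^{2}+1}{8}<1$ for all $B$ with $|2B-1|<\tfrac{\sqrt{7}}{2}$ (e.g. the value is $\tfrac{1}{8}$ at $B=\tfrac{1}{2}$). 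Any correct argument must therefore produce a bound of at least $1$, so the stated constant is unreachable.

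For comparison, the paper uses the three-term decomposition
\[
\Bigl(w_{3}+\tfrac{1}{4}w_{1}w_{2}+\tfrac{1}{16}w_{1}^{3}\Bigr)-\frac{16B^{2}-1}{16}\,w_{1}\bigl(w_{2}-w_{1}^{2}\bigr)+\frac{16B^{2}-32B-5}{16}\,w_{1}w_{2}
\]
(the printed version drops the factors $w_{1}$, without which the identity does not even hold) and then adds the coefficients \emph{without absolute values}, getting $1+\tfrac{16B^{2}-1}{16}+\tfrac{16B^{2}-32B-5}{16}=\tfrac{4(2B-1)^{2}+1}{8}$. Since $16B^{2}-1$ and $16B^{2}-32B-5$ are negative for most admissible $B\in \lbrack -1,1)$, that summation is invalid, and this is precisely where the too-small constant originates. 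So your proposal is sound up to the point where you stop, your suspicion about the numerical factor is justified, and the honest conclusion from either your decomposition or the paper's is a bound of the form $1+|\cdot |+|\cdot |$, strictly larger than the one asserted in the theorem.
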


\begin{proof}
From the relations $\left( \ref{Fekete1}\right) $ and $\left( \ref{Fekete2}%
\right) ,$ we have%
\begin{equation*}
\left( a_{p+3}-\frac{q+2}{q^{2}+q+1}\frac{\Lambda _{1+p}\Lambda _{2+p}}{%
\Lambda _{3+p}}a_{p+2}a_{p+1}+\frac{1}{[3,q]}\frac{\Lambda _{1+p}^{3}}{%
\Lambda _{3+p}}a_{p+1}^{3}\right) =\frac{(A-B)\psi _{3}}{\Lambda _{3+p}}%
\left \{ w_{3}-2Bw_{1}w_{2}+B^{2}w_{1}^{3}\right \} ,
\end{equation*}%
equivalently, we have%
\begin{equation*}
\left \vert \left( a_{p+3}-\frac{q+2}{q^{2}+q+1}\frac{\Lambda _{1+p}\Lambda
_{2+p}}{\Lambda _{3+p}}a_{p+2}a_{p+1}+\frac{1}{[3,q]}\frac{\Lambda _{1+p}^{3}%
}{\Lambda _{3+p}}a_{p+1}^{3}\right) \right \vert \text{ \  \  \  \  \  \  \  \  \  \
\  \  \  \  \  \  \  \  \  \  \  \  \  \  \  \  \  \  \  \  \  \  \  \  \  \  \  \  \  \  \  \  \  \  \  \  \  \
\  \  \  \  \  \  \  \  \  \  \ }
\end{equation*}%
\begin{eqnarray*}
&=&\frac{(A-B)\psi _{3}}{\Lambda _{3+p}}\left \vert \left( w_{3}+\frac{1}{4}%
w_{1}w_{2}+\frac{1}{16}w_{1}^{3}\right) -\frac{16B^{2}-1}{16}\left(
w_{2}-w_{1}^{2}\right) +\frac{16B^{2}-32B-5}{16}w_{2}\right \vert \\
&\leqq &\frac{(A-B)\psi _{3}}{\Lambda _{3+p}}\left \{ 1+\frac{16B^{2}-1}{16}+%
\frac{16B^{2}-32B-5}{16}\right \} \\
&\leqq &\frac{(A-B)\psi _{3}}{\Lambda _{3+p}}\left \{ \frac{16B^{2}-16B+5}{8}%
\right \} ,
\end{eqnarray*}%
where we have used $\left( \ref{L21}\right) $ and $\left( \ref{L22}\right) .$%
This completes the proof.
\end{proof}

\begin{theorem}
\label{Thm2} Let $f\in \mathfrak{A}_{p}$ be given by $\left( \ref{eq1}%
\right) . $ Then the function $f$ is in the class $\mathcal{S}_{p}^{\ast
}\left( q,\mu ,A,B\right) ,$ if and only if 
\begin{equation}
\frac{e^{i\theta }\left( B-\left[ p,q\right] A\right) }{z}\left[ \mathcal{L}%
_{q}^{\mu +p-1}f\left( z\right) \ast \left( \frac{\left( N+1\right)
z^{p}-qLz^{p+1}}{\left( 1-z\right) \left( 1-qz\right) }\right) \right] \neq
0,  \label{eq11a}
\end{equation}%
for all 
\begin{eqnarray}
N &=&N_{\theta }=\frac{\left( \left[ p,q\right] -1\right) e^{-i\theta }}{%
\left( \left[ p,q\right] A-B\right) },  \notag \\
&&  \label{LM1} \\
L &=&L_{\theta }=\frac{\left( e^{-i\theta }+\left[ p,q\right] A\right) }{%
\left( \left[ p,q\right] A-B\right) },  \notag
\end{eqnarray}%
and also for $N=0,$ $L=1.$
\end{theorem}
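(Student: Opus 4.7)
The strategy is the convolution criterion (Silverman--Silvia style) adapted to the $q$-derivative setting. Writing $F(z):=\mathcal{L}_q^{\mu+p-1}f(z)$, I would first translate the subordination $\frac{z\partial_q F(z)}{[p,q]F(z)}\prec\frac{1+Az}{1+Bz}$ into two non-vanishing conditions: (i) $F(z)\neq 0$ for $z\in\mathbb{D}\setminus\{0\}$, which is needed for $\frac{z\partial_q F}{[p,q]F}$ to be analytic in $\mathbb{D}$ with value $1$ at the origin, and (ii) the boundary-omission condition
$$\bigl(1+Be^{i\theta}\bigr)\,z\partial_q F(z)-[p,q]\bigl(1+Ae^{i\theta}\bigr)\,F(z)\neq 0,\quad z\in\mathbb{D}\setminus\{0\},\ \theta\in[0,2\pi).$$
This reformulation rests on the univalence of $(1+Az)/(1+Bz)$, which guarantees $\frac{z\partial_q F}{[p,q]F}(\mathbb{D})\subset\frac{1+Az}{1+Bz}(\mathbb{D})$ precisely when the image avoids every boundary value.

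Next I would realize both $F$ and $z\partial_q F$ as Hadamard convolutions with $F$. The identity $F=F\ast\frac{z^p}{1-z}$ is immediate. For the other, the partial fraction
$$\frac{1}{(1-z)(1-qz)}=\frac{1}{1-q}\left(\frac{1}{1-z}-\frac{q}{1-qz}\right)$$
identifies the coefficient of $z^{n+p}$ in $\frac{z^p}{(1-z)(1-qz)}$ as $[n+1,q]$, and the elementary $q$-identity $[p,q][n+1,q]-q[p-1,q][n,q]=[n+p,q]$ then gives
$$z\partial_q F(z)=F(z)\ast\frac{[p,q]\,z^p-q[p-1,q]\,z^{p+1}}{(1-z)(1-qz)}.$$

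Substituting both representations into (ii) and placing over the common denominator $(1-z)(1-qz)$ produces a polynomial numerator which, after using $[p,q]-[p-1,q]=q^{p-1}$, takes the form
$$[p,q](B-A)e^{i\theta}\,z^p+q\bigl[q^{p-1}+([p,q]A-[p-1,q]B)e^{i\theta}\bigr]z^{p+1}.$$
Factoring $e^{i\theta}(B-[p,q]A)$ out of this polynomial and dividing by $z$ converts (ii) into the inequality \eqref{eq11a} with $N=N_\theta$ and $L=L_\theta$ as in \eqref{LM1}. Finally, for (i) I would check that the specialisation $N=0$, $L=1$ collapses the kernel to $\frac{z^p-qz^{p+1}}{(1-z)(1-qz)}=\frac{z^p}{1-z}$, whose Hadamard product with $F$ returns $F$ itself; thus the $N=0,L=1$ case of \eqref{eq11a} is exactly the statement $F\neq 0$ on $\mathbb{D}\setminus\{0\}$. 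Combining the two pieces delivers the claimed equivalence.

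The main obstacle is the bookkeeping in the third step: matching the polynomial numerator against the desired factored form $(N+1)z^p-qL z^{p+1}$ with the $\theta$-dependence correctly housed inside $N$ and $L$, and confirming that the degenerate boundary value $e^{i\theta}=-1/B$ (possible only when $B=-1$) is absorbed into the $N=0,\ L=1$ piece instead of requiring a separate argument.
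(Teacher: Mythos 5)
Your skeleton is exactly the paper's: reduce the subordination to (i) non-vanishing of $F=\mathcal{L}_q^{\mu+p-1}f$ and (ii) the boundary-omission condition $(1+Be^{i\theta})\,z\partial_qF-[p,q](1+Ae^{i\theta})F\neq0$, rewrite both terms as Hadamard products against $F$, and collect the resulting kernel; the converse likewise runs through the $N=0$, $L=1$ case to get analyticity of $h=z\partial_qF/([p,q]F)$ and then the connectedness argument ($h(\mathbb{D})$ is connected, misses $H(\partial\mathbb{D})$, and contains $H(0)=h(0)=1$, hence lies in $H(\mathbb{D})$, so $h\prec H$). You compress that last step into a single clause about univalence; it is the same idea as the paper's, but it is the substance of the converse and should be spelled out rather than asserted.

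The genuine problem is the step you yourself flag as ``bookkeeping'': it does not close. Your convolution identity $z\partial_qF=F\ast\frac{[p,q]z^p-q[p-1,q]z^{p+1}}{(1-z)(1-qz)}$ is the correct one for general $p$ (the paper instead uses $z\partial_qF=F\ast\frac{z^{p}}{(1-z)(1-qz)}$, whose $z^{n+p}$-coefficient is $[n+1,q]$ rather than $[n+p,q]$, so it is valid only for $p=1$). But with your kernel the $z^p$-coefficient of the combined numerator is $[p,q](B-A)e^{i\theta}$, so factoring out $e^{i\theta}(B-[p,q]A)$ forces $N+1=\frac{[p,q](A-B)}{[p,q]A-B}$, i.e.\ $N=\frac{([p,q]-1)B}{B-[p,q]A}$, which is independent of $\theta$ and is not the $N_{\theta}$ of \eqref{LM1}; the $z^{p+1}$-coefficient similarly fails to reproduce $L_{\theta}$. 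So the proposal as written cannot terminate in the theorem's stated constants. (The stated $N_{\theta}$ is what comes out of the paper's $p=1$-only identity, and even then the paper's own computation yields $L=\frac{[p,q](e^{-i\theta}+A)}{[p,q]A-B}$ rather than \eqref{LM1}; for $p\geq2$ the constants in the theorem appear to be mis-stated.) Either restrict to $p=1$, where your kernel and the paper's coincide and the factorization genuinely gives $N=0$ and $L=(e^{-i\theta}+A)/(A-B)$, or recompute $N_{\theta}$ and $L_{\theta}$ from your (correct) numerator; but as it stands the argument does not reach \eqref{eq11a}--\eqref{LM1}.
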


\begin{proof}
Since the function $f\in \mathcal{S}_{p}^{\ast }\left( q,\mu ,A,B\right) $
is analytic in $\mathbb{D}$, it implies that $\mathcal{L}_{q}^{\mu
+p-1}f\left( z\right) \neq 0$ for all $z\in \mathbb{D}^{\ast }=\mathbb{D}%
\backslash \{0\}$; that is 
\begin{equation*}
\frac{e^{i\theta }\left( B-\left[ p,q\right] A\right) }{z}\mathcal{L}%
_{q}^{\mu +p-1}f\left( z\right) \neq 0\text{ \ }\left( z\in \mathbb{D}%
\right) ,
\end{equation*}%
and this is equivalent to $\left( \ref{eq11a}\right) $ for $N=0$ and $L=1.$
From $\left( \ref{eq6}\right) ,$ according to the definition of the
subordination, there exists an analytic function $w$ with the property that $%
w\left( 0\right) =0$ and $\left \vert w(z)\right \vert <1$ such that%
\begin{equation*}
\frac{z\partial _{q}\mathcal{L}_{q}^{\mu +p-1}f\left( z\right) }{\left[ p,q%
\right] \mathcal{L}_{q}^{\mu +p-1}f\left( z\right) }=\frac{1+A\omega \left(
z\right) }{1+B\omega \left( z\right) }\text{ }\left( z\in \mathbb{D}\right) ,
\end{equation*}%
which is equalent for $z\in \mathbb{D},$ $0\leqq \theta <2\pi $%
\begin{equation}
\frac{z\partial _{q}\mathcal{L}_{q}^{\mu +p-1}f\left( z\right) }{\left[ p,q%
\right] \mathcal{L}_{q}^{\mu +p-1}f\left( z\right) }\neq \frac{1+Ae^{i\theta
}}{1+Be^{i\theta }},  \label{eq12}
\end{equation}%
and further written in more simplified form%
\begin{equation}
\left( 1+Be^{i\theta }\right) z\partial _{q}\mathcal{L}_{q}^{\mu
+p-1}f\left( z\right) -\left[ p,q\right] \left( 1+Ae^{i\theta }\right) 
\mathcal{L}_{q}^{\mu +p-1}f\left( z\right) \neq 0.  \label{eq131}
\end{equation}%
Now using the following convolution properties in $\left( \ref{eq131}\right) 
$ 
\begin{equation*}
\begin{array}{ccc}
\mathcal{L}_{q}^{\mu +p-1}f\left( z\right) \ast \frac{z^{p}}{\left(
1-z\right) }=\mathcal{L}_{q}^{\mu +p-1}f\left( z\right) & \text{and} & 
\mathcal{L}_{q}^{\mu +p-1}f\left( z\right) \ast \frac{z^{p}}{\left(
1-z\right) \left( 1-qz\right) }=z\partial _{q}\mathcal{L}_{q}^{\mu
+p-1}f\left( z\right) ,%
\end{array}%
\end{equation*}%
and then simple computation gives%
\begin{equation*}
\frac{1}{z}\left[ \mathcal{L}_{q}^{\mu +p-1}f\left( z\right) \ast \left( 
\frac{\left( 1+Be^{i\theta }\right) z^{p}}{\left( 1-z\right) \left(
1-qz\right) }-\frac{\left[ p,q\right] \left( 1+Ae^{i\theta }\right) z^{p}}{%
\left( 1-z\right) }\right) \right] \neq 0,
\end{equation*}%
or equivalently 
\begin{equation*}
\frac{\left( B-\left[ p,q\right] A\right) e^{i\theta }}{z}\left[ \mathcal{L}%
_{q}^{\mu +p-1}f\left( z\right) \ast \left( \frac{\left( N+1\right)
z^{p}-Lqz^{p+1}}{\left( 1-z\right) \left( 1-qz\right) }\right) \right] \neq
0,
\end{equation*}%
which is the required direct part.$\medskip $

\noindent Assume that $\left( \ref{eq11}\right) $ holds true for $L_{\theta
}-1=N_{\theta }=0$, it follows that 
\begin{equation*}
\frac{e^{i\theta }\left( B-\left[ p,q\right] A\right) }{z}\mathcal{L}%
_{q}^{\mu +p-1}f\left( z\right) \neq 0,\text{ for all }z\in \mathbb{D}.
\end{equation*}%
Thus the function $h\left( z\right) =\frac{z\partial _{q}\mathcal{L}%
_{q}^{\mu +p-1}f\left( z\right) }{\left[ p,q\right] \mathcal{L}_{q}^{\mu
+p-1}f\left( z\right) }$ is analytic in $\mathbb{D}$ and $h\left( 0\right)
=1.$ Since we have shown that $\left( \ref{eq131}\right) $ and $\left( \ref%
{eq11}\right) $ are equivalent, therefore we have%
\begin{equation}
\frac{z\partial _{q}\mathcal{L}_{q}^{\mu +p-1}f\left( z\right) }{\left[ p,q%
\right] \mathcal{L}_{q}^{\mu +p-1}f\left( z\right) }\neq \frac{1+Ae^{i\theta
}}{1+Be^{i\theta }}\text{ \ }\left( z\in \mathbb{D}\right) .  \label{eq16}
\end{equation}%
Suppose that%
\begin{equation*}
H\left( z\right) =\frac{1+Az}{1+Bz},\  \ z\in \mathbb{D}.
\end{equation*}%
Now from relation $\left( \ref{eq16}\right) $ it is clear that $H\left(
\partial \mathbb{D}\right) \cap h\left( \mathbb{D}\right) =\phi .$ Therefore
the simply connected domain $h\left( \mathbb{D}\right) $ is contained in a
connected component of $%
%TCIMACRO{\U{2102} }%
%BeginExpansion
\mathbb{C}
%EndExpansion
\backslash H\left( \partial \mathbb{D}\right) .$ The univalence of the
function $h$ together with the fact $H\left( 0\right) =h\left( 0\right) =1$
shows that $h\prec H$ which shows that $f\in \mathcal{S}_{p}^{\ast }\left(
q,\mu ,A,B\right) .$
\end{proof}

\noindent We now define an integral operator for the function $f\in 
\mathfrak{A}_{p}$ as follows;

\begin{definition}
Let $f\in \mathfrak{A}_{p}.$ Then $\mathcal{L}:\mathfrak{A}_{p}\rightarrow 
\mathfrak{A}_{p}$ is called the $q$-analogue of Benardi integral operator
for multivalent functions defined by $\mathcal{L}\left( f\right) =F_{\eta
,p} $ with $\eta >-p,$ where $F_{\eta ,p}$ is given by%
\begin{eqnarray}
F_{\eta ,p}\left( z\right) &=&\frac{[\eta +p,q]}{z^{\eta }}\int
\limits_{0}^{z}t^{\eta -1}f(t)d_{q}t,  \label{B1} \\
&=&z^{p}+\sum \limits_{n=1}^{\infty }\frac{[\eta +p,q]}{[\eta +p+n,q]}%
a_{n+p}z^{n+p},\text{ }\left( z\in \mathbb{D}\right) .  \label{B2}
\end{eqnarray}
\end{definition}

\noindent We easily obtain that the series defined in $\left( \ref{B2}%
\right) $ is converges absolutely in $\mathbb{D}$. Now If $q\rightarrow 1$,
then the operator $F_{\eta ,p}$ reduces to the integral operator studied in 
\cite{Wang} and further by taking $p=1,$ we obtain the familiar Bernardi
integral operator introduced in \cite{Ber}.

\begin{theorem}
If $f$ is of the form $\left( \ref{eq1}\right) $ belongs to the family $%
\mathcal{S}_{p}^{\ast }\left( q,\mu ,A,B\right) $ and%
\begin{equation}
F_{\eta ,p}\left( z\right) =z^{p}+\sum \limits_{n=1}^{\infty }b_{n+p}z^{n+p},
\label{B3}
\end{equation}
where $F_{\eta ,p}$ is the integral operator given by $\left( \ref{B1}%
\right) ,$ then 
\begin{equation*}
\left \vert b_{p+1}\right \vert \leqq \frac{\lbrack \eta +p,q]}{[\eta +p+1,q]%
}\frac{\psi _{1}\left( A-B\right) }{\wedge _{1+p}}
\end{equation*}%
and for $n\geqq 2$%
\begin{equation*}
\left \vert b_{p+n}\right \vert \leqq \frac{\lbrack \eta +p,q]}{[\eta +p+n,q]%
}\frac{\left( A-B\right) \psi _{n}}{\wedge _{n+p}}\prod
\limits_{t=1}^{n-1}\left( 1+\frac{\left[ p,q\right] \left( A-B\right) }{%
\left( \left[ p+t,q\right] -\left[ p,q\right] \right) }\right) ,
\end{equation*}%
where $\psi _{n}$ and $\wedge _{n+p}$ are defined by $\left( \ref{eq33}%
\right) $ and $\left( \ref{ita}\right) $ respectively.
\end{theorem}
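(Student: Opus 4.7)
The plan is to reduce the claim directly to the coefficient bounds of Theorem~\ref{Th1} by exploiting the fact that the $q$-Bernardi operator $F_{\eta,p}$ acts diagonally on the Taylor coefficients of $f$.

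First I would compare the two series representations available for $F_{\eta,p}$. From the definition $(\ref{B2})$ we have
\begin{equation*}
F_{\eta ,p}(z)=z^{p}+\sum_{n=1}^{\infty}\frac{[\eta+p,q]}{[\eta+p+n,q]}\,a_{n+p}\,z^{n+p},
\end{equation*}
while from the assumed expansion $(\ref{B3})$ we have $F_{\eta ,p}(z)=z^{p}+\sum_{n=1}^{\infty}b_{n+p}z^{n+p}$. Matching coefficients of $z^{n+p}$ gives the key identity
\begin{equation*}
b_{n+p}=\frac{[\eta+p,q]}{[\eta+p+n,q]}\,a_{n+p}\qquad (n\geq 1).
\end{equation*}

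Next, since $f\in \mathcal{S}_{p}^{\ast}(q,\mu,A,B)$, I may directly invoke Theorem~\ref{Th1}, which furnishes the bounds $|a_{p+1}|\leq \psi_{1}(A-B)/\wedge_{1+p}$ and, for $n\geq 2$,
\begin{equation*}
|a_{n+p}|\leq \frac{(A-B)\psi_{n}}{\wedge_{n+p}}\prod_{t=1}^{n-1}\left(1+\frac{[p,q](A-B)}{[p+t,q]-[p,q]}\right).
\end{equation*}

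Taking absolute values in the identity for $b_{n+p}$ and substituting these estimates yields the two desired inequalities immediately, since the scalar factor $[\eta+p,q]/[\eta+p+n,q]$ is a positive real number (as $\eta>-p$ ensures all the $q$-brackets involved are positive). There is no real obstacle here: the only thing to verify carefully is the coefficient identification, so I would double-check the computation of $\int_0^z t^{\eta-1} f(t)\,d_q t$ against formula $(\ref{B2})$ to make sure the multiplier $[\eta+p,q]/[\eta+p+n,q]$ has been transcribed correctly, and confirm positivity of this multiplier under the standing hypothesis $\eta>-p$ and $q\in(0,1)$. Once that is in place, the proof is a one-line application of Theorem~\ref{Th1}.
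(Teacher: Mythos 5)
Your proposal is correct and follows exactly the route the paper takes: the paper's proof is the one-line remark that the result ``follows easily by using $(\ref{B2})$ and Theorem \ref{Th1},'' which is precisely your coefficient identification $b_{n+p}=\frac{[\eta+p,q]}{[\eta+p+n,q]}a_{n+p}$ followed by an application of Theorem \ref{Th1}. You have simply written out the details that the paper leaves implicit.
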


\begin{proof}
The proof follows easily by using $\left( \ref{B2}\right) $ and Theorem \ref%
{Th1}.
\end{proof}

\begin{theorem}
\label{thm3} Let $f\in \mathcal{S}_{p}^{\ast }\left( q,\mu ,A,B\right) $ and
be given by $\left( \ref{eq1}\right) .$ Also if $F_{\eta ,p}$ is the
integral operator defined by $\left( \ref{B1}\right) $ and is of the form $%
\left( \ref{B3}\right) ,$ then for $\sigma \in \mathbb{C}$%
\begin{equation*}
\left \vert b_{p+2}-\sigma b_{p+1}^{2}\right \vert \leqq \frac{\lbrack \eta
+p,q]}{[\eta +p+2,q]}\frac{(A-B)\psi _{2}}{\Lambda _{p+2}}\left \{ 1;\text{ }%
\left \vert \upsilon \right \vert \right \} ,
\end{equation*}%
where%
\begin{equation}
\upsilon =\left( B-(A-B)\psi _{1}\right) +\frac{\Lambda _{p+2}\psi _{1}^{2}}{%
\Lambda _{p+1}^{2}\psi _{2}}(A-B)\frac{[\eta +p,q][\eta +p+2,q]}{[\eta
+p+1,q]^{2}}\sigma .  \label{B4}
\end{equation}
\end{theorem}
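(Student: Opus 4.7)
The plan is to reduce the statement directly to Theorem \ref{Th2} by using the explicit relation between the coefficients of $F_{\eta,p}$ and the coefficients of $f$ given by the series expansion in $(\ref{B2})$.

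First I would read off from $(\ref{B2})$ and $(\ref{B3})$ the identification
\begin{equation*}
b_{n+p}=\frac{[\eta+p,q]}{[\eta+p+n,q]}\,a_{n+p},\qquad n\geq 1,
\end{equation*}
so in particular
\begin{equation*}
b_{p+1}=\frac{[\eta+p,q]}{[\eta+p+1,q]}\,a_{p+1},\qquad b_{p+2}=\frac{[\eta+p,q]}{[\eta+p+2,q]}\,a_{p+2}.
\end{equation*}

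Next I would factor the quantity of interest. A direct computation gives
\begin{equation*}
b_{p+2}-\sigma b_{p+1}^{2}=\frac{[\eta+p,q]}{[\eta+p+2,q]}\left(a_{p+2}-\lambda a_{p+1}^{2}\right),
\end{equation*}
where
\begin{equation*}
\lambda=\sigma\,\frac{[\eta+p,q]\,[\eta+p+2,q]}{[\eta+p+1,q]^{2}}.
\end{equation*}
The whole point of this rewrite is to convert the Fekete--Szeg\"o functional for $F_{\eta,p}$ into the Fekete--Szeg\"o functional for $f$ itself with a rescaled complex parameter.

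Finally, since $f\in\mathcal{S}_{p}^{\ast}(q,\mu,A,B)$, I would apply Theorem \ref{Th2} to $a_{p+2}-\lambda a_{p+1}^{2}$ with this particular $\lambda$. That theorem yields
\begin{equation*}
\left|a_{p+2}-\lambda a_{p+1}^{2}\right|\leq\frac{(A-B)\psi_{2}}{\Lambda_{p+2}}\max\bigl\{1,\,|\upsilon|\bigr\},
\end{equation*}
with $\upsilon=(B-(A-B)\psi_{1})+\frac{\Lambda_{p+2}\psi_{1}^{2}}{\Lambda_{p+1}^{2}\psi_{2}}(A-B)\lambda$. Substituting the chosen $\lambda$ back into this expression recovers precisely the formula $(\ref{B4})$, and multiplying the bound by the factor $[\eta+p,q]/[\eta+p+2,q]$ yields the claimed inequality. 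There is essentially no obstacle here beyond bookkeeping: the only point to watch is that the scaling factor $[\eta+p,q][\eta+p+2,q]/[\eta+p+1,q]^{2}$ must be attached to $\sigma$ (not to $\lambda$) when translating back, so that the statement quoted in $(\ref{B4})$ matches the output of Theorem \ref{Th2} verbatim.
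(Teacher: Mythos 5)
Your proposal is correct and follows exactly the paper's own argument: extract the coefficient relations $b_{p+n}=\frac{[\eta+p,q]}{[\eta+p+n,q]}a_{p+n}$ from $(\ref{B2})$--$(\ref{B3})$, factor out $\frac{[\eta+p,q]}{[\eta+p+2,q]}$ to reduce to the functional $a_{p+2}-\lambda a_{p+1}^{2}$ with $\lambda=\sigma\,[\eta+p,q][\eta+p+2,q]/[\eta+p+1,q]^{2}$, and invoke Theorem \ref{Th2}. No differences worth noting.
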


\begin{proof}
From $\left( \ref{B2}\right) $ and $\left( \ref{B3}\right) ,$ we easily have%
\begin{eqnarray*}
b_{p+1} &=&\frac{[\eta +p,q]}{[\eta +p+1,q]}a_{p+1}, \\
b_{p+2} &=&\frac{[\eta +p,q]}{[\eta +p+2,q]}a_{p+2}.
\end{eqnarray*}%
Now 
\begin{eqnarray*}
\left \vert b_{p+2}-\sigma b_{p+1}^{2}\right \vert &=&\frac{[\eta +p,q]}{%
[\eta +p+2,q]}\left \vert a_{p+2}-\sigma \frac{\lbrack \eta +p,q][\eta
+p+2,q]}{[\eta +p+1,q]^{2}}a_{p+1}^{2}\right \vert , \\
&\leqq &\frac{[\eta +p,q]}{[\eta +p+2,q]}\frac{(A-B)\psi _{2}}{\Lambda _{p+2}%
}\left \{ 1;\text{ }\left \vert \upsilon \right \vert \right \} ,
\end{eqnarray*}%
where $\upsilon $ is given by $\left( \ref{B4}\right) $ and we have used
Theorem \ref{Th2} to complete the proof.
\end{proof}

\begin{center}
\textbf{Conflicts of Interest}
\end{center}

\noindent The authors agree with the contents of the manuscript and there
are no conflicts of interest among the authors.

\end{document}